\documentclass{article}

\usepackage[T1]{fontenc}
\usepackage[utf8]{inputenc}
\usepackage{amsmath,amssymb,mathtools,amsthm}
\usepackage[hidelinks]{hyperref}
\usepackage{tikz}
\newtheorem{theorem}{Theorem}[section]
\newtheorem{proposition}[theorem]{Proposition}
\newtheorem{lemma}[theorem]{Lemma}

\theoremstyle{remark}

\usepackage{pgfplots}
\pgfplotsset{compat=1.18}

\newcommand{\R}{\mathbb{R}}

\newcommand{\norm}[1]{\| #1\|}

\newcommand{\dd}{\,\mathrm{d}}
\title{
Nesterov Flow May Travel Infinitely Long to Converge to a Minimizer
}
\author{Ernest K. Ryu}
\date{}

\begin{document}

\maketitle

\begin{abstract}
Recent work has established that the trajectory of the Nesterov ODE, a the continuous-time model of Nesterov's accelerated gradient method, exhibits point convergence towards a minimizer of a convex potential. A natural next question is whether this point convergence can be upgraded to rectifiability, namely whether the convergent orbit has finite path length. This work provides the answer in the negative by constructing a differentiable convex potential in $\mathbb{R}^2$ for which the flow converges to its minimizer but still accumulates infinite path length. All proofs of this work are due entirely to an internal model at OpenAI.
\end{abstract}

\section{Introduction}

The critical Nesterov flow
\[
  \ddot X(t)+\frac{3}{t}\dot x(t)+\nabla f(x(t))=0
  \qquad t>0
\]
with initial condition $X(0)=X_0$ and $\dot{X}(0)=0$ and differentiable convex function $f\colon\mathbb{R}^d\rightarrow\mathbb{R}$ was introduced by Su, Boyd, and Cand\`es \cite{SuBoydCandes2014,SuBoydCandes2016} as the continuous-time model of Nesterov's accelerated gradient method \cite{Nesterov1983}. 
A Lyapunov analysis yields the function-value rate
\[
  f(X(t)) - \min f = \mathcal{O}(1/t^2),
\]
and a large body of literature has studied finer asymptotic properties of the Nesterov flow \cite{attouch2000heavy,chambolle2015convergence,attouch2015fast,attouch2016fast,may2017asymptotic,AttouchChbaniPeypouquetRedont2018,attouch2019rate,attouch2025recovering}. Most recently, Jang, Ryu, Bo{\c t}, Fadili, and Nguyen 
 proved that the trajectory itself converges to a minimizer if a minimizer exists \cite{JangRyu2025,BotFadiliNguyen2025}.

There is also an active body of work bounding the path length for gradient flow $\dot{X}=-\nabla f(X)$ based on the Kurdyka--{\L}ojasiewicz (KL) inequality \cite{Lojasiewicz1963,Kurdyka1998,BolteDaniilidisLeyMazet2010,AbsilMahonyAndrews2005,AttouchBolteRedontSoubeyran2010,AttouchBolteSvaiter2013,AttouchChbaniPeypouquetRedont2018,AttouchChbaniRiahi2019,DAcuntoKurdyka2021}
and the notion of self-contracting curves 
\cite{ManselliPucci1991,DaniilidisLeySabourau2010,DaniilidisDavidDurandCartagenaLemenant2015,StepanovTeplitskaya2017,GuptaBalakrishnanRamdas2021}.



For the critical Nesterov flow, a natural next question is whether this point convergence can be upgraded to \emph{rectifiability}, namely whether the convergent orbit has finite path length. This paper presents the answer in the negative. We construct a convex $C^1$ potential $f\colon \mathbb{R}^2\rightarrow\mathbb{R}$ for which the flow converges to its minimizer but still accumulates infinite path length.

The construction is based on two key insights. First, for radial potentials, the associated flow satisfies a conservation-of-angular-momentum-like identity:
\[
  t^3
  \!\!\!
\underbrace{
  \det\big(x(t),\dot x(t)\big)}_{=\text{angular momentum}}\!\!\equiv \kappa,.
\]
Second, the pathology arises when the potential behaves like $\|x\|$ near the origin. But, to ensure differentiability at the origin, we have the gradient vanish slowly (logarithmically) as $x\rightarrow 0$.

\paragraph{Comment on the use of AI.} The mathematical proofs in this paper are due entirely to an internal model at OpenAI. The role of the human author was simply to digest the proofs and modify the write-ups for clarity.


\section{Construction with infinite path length}

In this section, we prove the following principal claim.

\begin{theorem}\label{thm:main}
There exist a convex $C^1$ function $f\colon\R^2\to\R$ with unique minimizer at the origin and a global solution $X\colon[0,\infty)\to\R^2$ of
\[
   \ddot X(t)+\frac{3}{t}\dot X(t)+\nabla f(X(t))=0,
  \qquad t>0,
  \qquad X(0)=X_0,
  \qquad \dot X(0)=0,
\]
some $X_0\neq0$, 
such that
\[
  \int_0^\infty \| \dot X(t)\|\,\dd t=\infty.
\]
\end{theorem}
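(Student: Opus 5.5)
The plan is to reduce to a one-dimensional problem using the radial structure, then show that the oscillations in that problem decay too slowly for the path length to be finite.

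\textbf{Reduction to one dimension.} I would take a radial potential $f(x)=\varphi(\norm{x})$, where $\varphi\colon[0,\infty)\to\R$ is convex and increasing with $\varphi'(0)=0$. Near $0$ I would set $\varphi'(r)=g(r)$ with $g(r)\approx 1/\log(1/r)$, so that $g(r)\to0$ only logarithmically; far away $\varphi$ is extended in any convenient convex $C^1$ way. Then $f$ is $C^1$ and convex, with unique minimizer at the origin. The identity $t^3\det(X,\dot X)\equiv\kappa$ from the introduction, together with $\dot X(0)=0$, forces $\kappa=0$. So the trajectory stays on the line $\R X_0$. Writing $X(t)=x(t)X_0/\norm{X_0}$ reduces the problem to the scalar equation
\[
\ddot x+\tfrac3t\dot x+g(|x|)\operatorname{sgn}(x)=0 .
\]
Existence and uniqueness away from $x=0$ are standard. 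At crossings of $0$ the velocity is nonzero, since energy is positive until convergence, so the crossings are transversal and the solution continues. Global existence follows from the energy $E=\tfrac12\dot x^2+\varphi(|x|)$, which satisfies $\dot E=-\tfrac3t\dot x^2\le0$. Convergence $x(t)\to0$ is the known result \cite{JangRyu2025,BotFadiliNguyen2025}, or follows directly here. The path length equals $\int_0^\infty|\dot x|\,dt$.

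\textbf{Heuristic rate.} For a potential behaving like $g\cdot|x|$ with $g$ nearly constant over one oscillation, the virial identity gives $\langle \dot x^2\rangle\approx\langle x\varphi'(x)\rangle\approx\langle\varphi\rangle$. Hence $\langle\dot x^2\rangle\approx\tfrac23E$, so $\dot E\approx-2E/t$ and $E\sim t^{-2}$. The velocity is then of size $\sqrt E\sim 1/t$, and $\int^\infty dt/t=\infty$. The logarithmic decay of $g$ is needed only for differentiability at the origin. It perturbs the exponents by factors of the form $1+O(1/\log)$, and these should not destroy the divergence.

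\textbf{Making it rigorous.} I would use a mixed Lyapunov functional
\[
\mathcal E(t)=E(t)+\frac{a}{t}\,x\dot x .
\]
Using $\tfrac{d}{dt}(x\dot x)=\dot x^2-x\varphi'(x)-\tfrac3t x\dot x$ and choosing $a$ so that the $\dot x^2$ and $x\varphi'(x)$ terms combine into a multiple of $\mathcal E$, I aim to show
\[
\dot{\mathcal E}\ge-\frac{2+\varepsilon(t)}{t}\,\mathcal E ,
\]
with $\varepsilon(t)\to0$. The condition $\varepsilon(t)\to0$ uses $r\varphi'(r)\approx\varphi(r)$ near $0$, i.e.\ $r g(r)/\varphi(r)\to1$, which holds since $g$ is slowly varying. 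The cross term is $O(\sqrt E\cdot|x|/t)$, and $|x|\lesssim E/g$, so it is lower order. This yields $E(t)\ge t^{-2-o(1)}$. That alone is not enough for divergence, so the estimate must be sharpened to $E(t)\ge c\,t^{-2}h(t)$ with $\int^\infty \sqrt{h(t)}\,dt/t=\infty$, for example $h$ a negative power of $\log t$. This should be possible because $\varepsilon(t)=O(1/\log(1/|x|))=O(1/\log t)$.

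\textbf{Lower bound on path length.} Finally I would convert the energy bound into a bound on $\int|\dot x|$. Over each half-oscillation between consecutive zeros, the path length is at least twice the amplitude $A$, and $A\ge\varphi^{-1}(\text{energy at the turning point})$. Summing over oscillations amounts to integrating $\sqrt{E}$ in time, so $\int|\dot x|\,dt\gtrsim\int\sqrt{E}\,dt\gtrsim\int dt/(t\,\mathrm{polylog}\,t)$. I will choose $g$ so that this polylog power is at most $1$, so the integral diverges.

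I expect the main obstacle to be the sharp energy lower bound: choosing $a$ and controlling the error terms precisely enough to lose at most a factor of order $1/\log t$ rather than a power of $t$.
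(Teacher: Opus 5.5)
Your route is genuinely different from the paper's, and I believe it goes through, though the key estimate is only sketched.

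\textbf{How the two routes differ.} The paper breaks radial symmetry with the one-sided term $\varepsilon\Psi_a(x_1)$, so the orbit acquires angular momentum $\kappa\neq0$. Inside the radial region, $t^3\det(X,\dot X)=\kappa$ then gives the tangential speed \emph{exactly}, namely $\kappa/(t^3r(t))$. The only estimate still needed is an \emph{upper} bound $r(t)\lesssim \log t/t^2$, which follows from the standard $O(t^{-2})$ rate and $F(r)\gtrsim r/\log(1/r)$.

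You keep $f$ radial, so $\kappa=0$ and all the length must come from infinitely many overshoots of the minimizer along a line. The price is a \emph{lower} bound on the energy: you must show the friction removes relative energy at rate at most $(2+O(1/\log t))/t$. The conservation law is exactly what lets the paper avoid this kind of estimate. In exchange you get a one-dimensional example, which shows that spiraling is not the mechanism, and you avoid the continuous-dependence argument for $\kappa_\varepsilon>0$.

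\textbf{The energy lower bound.} The step you flag does close, but the constant in $\varepsilon(t)$ matters, not just its order. Taking $a=2$ in your $\mathcal E$ gives exactly
\[
\dot{\mathcal E}=-\frac{2}{t}\,\mathcal E-\frac{2}{t}\bigl(|x|\varphi'(|x|)-\varphi(|x|)\bigr)-\frac{4}{t^2}\,x\dot x .
\]
For $\varphi(r)=\int_0^r \dd u/\log(1/u)$ one checks that $r\varphi'(r)/\varphi(r)-1\le 1/(\log(1/r)-1)$. Combine this with $|x(t)|\lesssim t^{-2}\log t$; this is the paper's radial bound, and its proof applies verbatim to your $\varphi$. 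You get $\varepsilon(t)=1/\log t+O(\log\log t/\log^2 t)$, so $\exp(-\int^t\varepsilon(s)\,\dd s/s)\asymp 1/\log t$. The cross term is only a relative error $O(\log t/t^3)$, which is integrable. Hence $E(t)\gtrsim 1/(t^2\log t)$, which suffices.

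\textbf{Converting energy to length.} Summing $2A$ over half-oscillations, as you propose, also needs an upper bound on the half-period (of order $\sqrt{A/g(A)}$), and you have not addressed this. A shorter route:
\begin{itemize}
\item[(i)] Integrate $\frac{d}{dt}(x\dot x)=\dot x^2-|x|\varphi'(|x|)-\frac3t x\dot x$ over $[t,2t]$ and use $\varphi(|x|)\le|x|\varphi'(|x|)$. This gives $\int_t^{2t}\dot x^2\gtrsim tE(2t)$.
\item[(ii)] Using $t|\dot x|\le C$, deduce $\int_t^{2t}|\dot x|\ge \int_t^{2t}\dot x^2/\sup_{[t,2t]}|\dot x|\gtrsim t^2E(2t)\gtrsim 1/\log t$.
\item[(iii)] Sum over dyadic blocks; the series diverges.
\end{itemize}

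\textbf{Staying on the line.} Since $\nabla f$ is not Lipschitz at $0$, do not deduce through uniqueness that the planar solution stays on the line. Simply embed your scalar solution; that is enough for an existence statement.
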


\subsection{Almost radial construction}

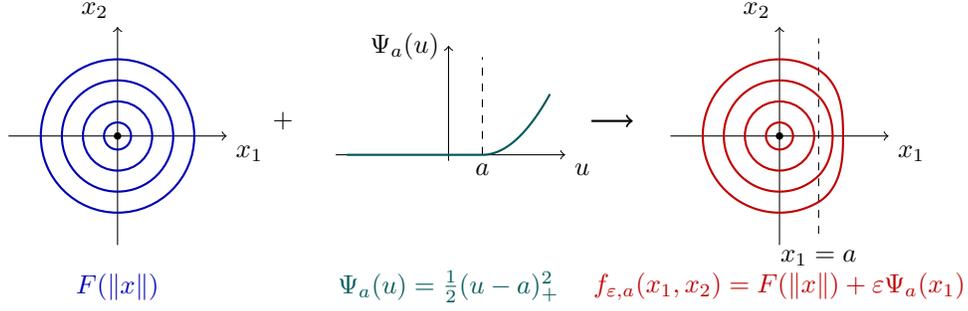
\begin{figure}[t]
\centering
\hspace{-0.3in}
\begin{tikzpicture}[scale=1]
  \begin{scope}[shift={(-4.4,0)}]
    \draw[->] (-1.45,0) -- (1.45,0) node[below right] {$x_1$};
    \draw[->] (0,-1.45) -- (0,1.45) node[above left] {$x_2$};
    \foreach \r in {0.18,0.46,0.74,1.02} {
      \draw[blue!70!black,thick] (0,0) circle (\r);
    }
    \fill (0,0) circle (1.4pt);
    \node[blue!70!black] at (0,-2.0) {$F(\|x\|)$};
  \end{scope}

  \begin{scope}[shift={(0,-0.25)}]
    \draw[->] (-1.5,0) -- (1.55,0) node[below right] {$u$};
    \draw[->] (0,-0.08) -- (0,1.45) node[left] {$\Psi_a(u)$};
    \draw[dashed] (0.45,0) -- (0.45,1.3);
    \node[below] at (0.45,0) {$a$};
    \draw[teal!70!black,thick] (-1.35,0) -- (0.45,0);
    \draw[teal!70!black,thick,samples=120,domain=0.45:1.35,smooth]
      plot (\x,{1.0*(\x-0.45)^2});
    \node[teal!70!black] at (0,-1.75) {$\Psi_a(u)=\frac12(u-a)_+^2$};
  \end{scope}

  \begin{scope}[shift={(4.4,0)}]
  \def\a{0.52}
  \draw[->] (-1.45,0) -- (1.45,0) node[below right] {$x_1$};
  \draw[->] (0,-1.45) -- (0,1.45) node[above left] {$x_2$};
  \draw[dashed] (\a,-1.3) -- (\a,1.3);
  \node[below] at (\a,-1.4) {$x_1=a$};

  \foreach \rr in {0.18,0.46} {
    \draw[red!75!black,thick] (0,0) circle (\rr);
  }

  \foreach \rr in {0.74,1.02} {
    \draw[red!75!black,thick,samples=200,smooth,domain=0:360,variable=\t]
      plot ({(\rr - 0.7*(max(\rr*cos(\t)-\a,0))^2)*cos(\t)},
            {(\rr - 0.7*(max(\rr*cos(\t)-\a,0))^2)*sin(\t)});
  }

  \fill (0,0) circle (1.4pt);
  \node[red!75!black] at (0,-2.0)
    {$f_{\varepsilon,a}(x_1,x_2)=F(\|x\|)+\varepsilon\Psi_a(x_1)$};
\end{scope}

  \node at (-2.2,0.2) {$+$};
  \draw[->,thick] (1.9,0.2) -- (2.45,0.2);
\end{tikzpicture}
\caption{Illustration of the potential $f_{\varepsilon,a}$. The function is radial near the origin, while the one-sided quadratic perturbation $\varepsilon\Psi_a(x_1)$, whose purpose is to generate positive angular momentum, becomes inactive once the trajectory is sufficiently close to the origin.
}
\label{fig:feps-schematic}
\end{figure}

Define $F\colon[0,\infty)\to\R$ by
\[
  F(0)=0,
  \qquad
  F(r)=
  \begin{cases}
    \displaystyle \int_0^r \frac{\dd u}{-\log u}, & 0<r\le e^{-2},\\[6pt]
    F(e^{-2})+\tfrac12\bigl(r-e^{-2}\bigr), & r\ge e^{-2}.
  \end{cases}
\]
Let $a>0$ and $\varepsilon>0$. Define
\[
  \Psi_a(u)=\frac12 (u-a)^2_+,
  \qquad
  (u-a)_+=\max\{u-a,0\}
  \]
and
\[
   f_{\varepsilon,a}(x_1,x_2)
  =
  F\Big(\sqrt{x_1^2+x_2^2}\Big)+\varepsilon \Psi_a(x_1).
\]
See Figure~\ref{fig:feps-schematic} for an illustration of $f_{\varepsilon,a}$.
From direct calculations, it can be checked that $f_{\varepsilon,a}\colon\mathbb{R}^2\rightarrow\mathbb{R}$ is convex and continuously differentiable, and $0$ is its unique minimizer of $f_{\varepsilon,a}$.

Let $\{X(t)\}_{t\ge 0}$ be the solution to
\[
   \ddot X(t)+\frac{3}{t}\dot X(t)+\nabla f_{\varepsilon,a}(X(t))=0,
  \qquad t>0,
  \qquad X(0)=(2a,a),
  \qquad \dot X(0)=0.
\]
Since the minimizer is unique, the convergence results of prior work imply $X(t)\rightarrow 0$. 
We refer to 
\[
\{x\in \mathbb{R}^2:\|x\|\le a\}
\]
as the \emph{radial region} since $f_{\varepsilon,a}(x)=F(\|x\|)$ is radial in this region. 
Note that the initial point $X(0)=(2a,a)$ is outside of the radial region.
Let
\begin{equation*}
T_\mathrm{rad}=\sup\{t: \|X(t)\|> a\}
\end{equation*}
be the time at which the dynamics $\{X(t)\}_{t\ge 0}$ permanently enters the radial region. Since $X(t)\rightarrow 0$, we have $T_\mathrm{rad}<\infty$.

\begin{figure}
\begin{tikzpicture}
\hspace{-0.65in}
\begin{axis}[
  name=orbit,
  width=8.0cm,
  height=6.8cm,
  axis equal image,
  title={Trajectory in $\mathbb{R}^2$},
  xmin=-0.044,
  xmax=0.044,
  ymin=-0.044,
  ymax=0.044,
  xtick={-0.04,-0.02,0,0.02,0.04},
  ytick={-0.04,-0.02,0,0.02,0.04},
  scaled ticks=false,
  tick label style={/pgf/number format/fixed,/pgf/number format/precision=2},
  grid=both,
]
  \addplot[very thick, color=red!75!black]
    table[x=x1, y=x2, col sep=tab] {nesterov_static_orbit_eps50.tsv};
  \addplot[dashed, color=orange!85!black, domain=-0.044:0.044, samples=2]
    ({0.02}, {x});
  \addplot[only marks, mark=*, mark size=1.6pt, color=black]
    coordinates {(0,0)};
  \addplot[only marks, mark=*, mark size=1.9pt, color=violet]
    coordinates {(0.04, 0.02)};
\end{axis}

\begin{axis}[
  at={(orbit.south east)},
  anchor=south west,
  xshift=1.8cm,
  width=8.4cm,
  height=6.8cm,
  name=growthleft,
  xmode=log,
  ymode=log,
  xlabel={Time $t$},
  ylabel style={color=blue!75!black},
  yticklabel style={text=blue!75!black},
  axis y line*=left,
  axis x line*=bottom,
  title={Function value and accumulated path length},
  grid=both,
  legend style={at={(0.13,0.86)}, anchor=south west},
  xmin=1,
  xmax=100000,
]
  \addplot[very thick, color=blue!75!black]
    table[x=time, y=f_value, col sep=tab] {nesterov_static_series_eps50.tsv};
  \addlegendentry{$f(X(t))$}
\end{axis}

\begin{axis}[
  at={(growthleft.south west)},
  anchor=south west,
  width=8.4cm,
  height=6.8cm,
  xmode=log,
  ymode=normal,
  xmin=1,
  xmax=100000,
  ymin=0,
  ymax=0.2,
  ytick={0.05,0.1,0.15,0.2},
  yticklabels={0.05,0.1,0.15,0.2},
  scaled y ticks=false,
  ylabel style={color=red!75!black},
  yticklabel style={text=red!75!black},
  axis y line*=right,
  axis x line=none,
  xticklabels={},
  grid=none,
  legend style={at={(0.98,0.52)}, anchor=north east},
]
  \addplot[very thick, color=red!75!black]
    table[x=time, y=arclength, col sep=tab] {nesterov_static_series_eps50.tsv};
  \addlegendentry{accumulated path length}
\end{axis}
\end{tikzpicture}

\caption{Numerical simulation of the pathological Nesterov trajectory with $a=0.02$ and $\varepsilon=50$.}
\label{fig:illustration}
\end{figure}
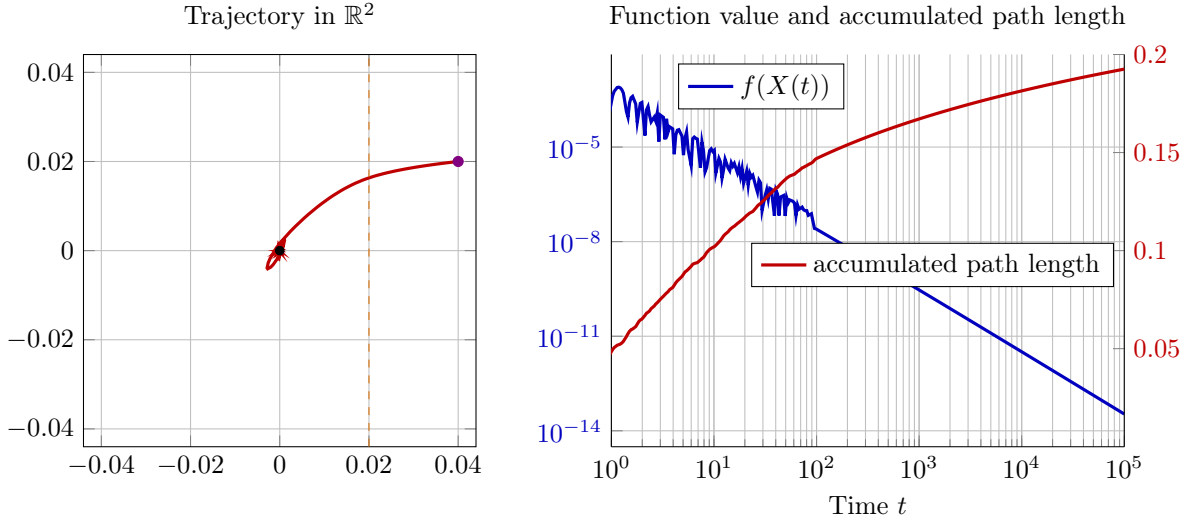

\subsection{Creation and conservation of angular momentum}

Consider the angular momentum
\[
    J (t):=\det\bigl(X (t),\dot X (t)\bigr).
\]
In the ODE $\ddot X(t)+\frac{3}{t}\dot X(t)+\nabla f_{\varepsilon,a}(X(t))=0$, the friction coefficient $3/t$ dissipates angular momentum. However, the quantity $t^3J (t)$ is conserved once the dynamics enters the radial region.

\begin{lemma}\label{lem:J-balance}
For all $t>0$,
\[
    t^3J (t)
  =
  \varepsilon \int_0^t s^3 X_{2}(s)\,\bigl(X_{1}(s)-a\bigr)_+\,\dd s,
\]
where we use the notation $X(t)=(X_1(t),X_2(t))\in \mathbb{R}^2$.
In particular, for all $t\ge T_\mathrm{rad}$,
\[
    t^3J (t)=\kappa_\varepsilon,
  \qquad
  \kappa_\varepsilon:=
  \varepsilon \int_0^{T_\mathrm{rad}} s^3 X_{2}(s)(X_{1}(s)-a)_+\,\dd s.
\]
\end{lemma}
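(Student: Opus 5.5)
The plan is to differentiate $t^3J(t)$ and use the ODE to compute the derivative explicitly; the integral identity then follows from the fundamental theorem of calculus.

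\textbf{Derivative of $J$.} Since $\det(\dot X,\dot X)=0$, we have
\[
\dot J=\det(X,\ddot X)=\det\Bigl(X,-\tfrac{3}{t}\dot X-\nabla f_{\varepsilon,a}(X)\Bigr)=-\tfrac{3}{t}J-\det\bigl(X,\nabla f_{\varepsilon,a}(X)\bigr).
\]

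\textbf{Evaluating the gradient term.} The gradient of $f_{\varepsilon,a}$ is
\[
\nabla f_{\varepsilon,a}(x)=F'(\|x\|)\,\frac{x}{\|x\|}+\varepsilon\,(x_1-a)_+\,e_1 ,
\]
with the radial part set to $0$ at $x=0$. The radial part is parallel to $x$, so its determinant with $x$ vanishes. For the perturbation, $\det(x,e_1)=x_1\cdot 0-x_2\cdot 1=-x_2$. Hence
\[
\det\bigl(X,\nabla f_{\varepsilon,a}(X)\bigr)=-\varepsilon X_2(X_1-a)_+ .
\]
Multiplying by $t^3$ gives
\[
\frac{d}{dt}\bigl(t^3J\bigr)=3t^2J+t^3\dot J=\varepsilon t^3X_2(t)\bigl(X_1(t)-a\bigr)_+ .
\]

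\textbf{Integrating from $0$ to $t$.} This step is the only delicate point: $X$ is $C^2$ on $(0,\infty)$ but the ODE is singular at $t=0$. The solution is known to be $C^1$ on $[0,\infty)$ with $\dot X(0)=0$, so $X$ and $\dot X$ are bounded near $0$. Therefore $t^3J(t)\to0$ as $t\to0^+$. Integrate the derivative identity over $[\delta,t]$ and let $\delta\to0^+$. The integrand $s^3X_2(X_1-a)_+$ is continuous on $[0,t]$, so the limit is harmless, and we obtain the stated formula.

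\textbf{The identity for $t\ge T_{\mathrm{rad}}$.} For $s>T_{\mathrm{rad}}$ we have $\|X(s)\|\le a$, hence $X_1(s)\le a$ and the integrand vanishes. The integral over $[0,t]$ therefore equals the integral over $[0,T_{\mathrm{rad}}]$, which is $\kappa_\varepsilon$.
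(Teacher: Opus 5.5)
Your proposal is correct and follows essentially the same route as the paper: differentiate $J$, observe that the radial part of $\nabla f_{\varepsilon,a}$ contributes no torque, obtain $\frac{d}{dt}(t^3J)=\varepsilon t^3X_2(X_1-a)_+$, integrate, and note that the integrand vanishes after $T_{\mathrm{rad}}$. Your treatment of the singular endpoint (integrating over $[\delta,t]$ and using $t^3J(t)\to0$) is slightly more careful than the paper's bare appeal to $J(0)=0$, but the argument is the same.
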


\begin{proof}
Differentiating $J $, we get
\begin{align*}
  J '(t)
  &=
  \det\bigl(\dot X (t),\dot X (t)\bigr)
  +\det\bigl(X (t),\ddot X (t)\bigr)\\
  &=
  \det\!\Big(
    X (t),
    -\frac{3}{t}\dot X (t)-\nabla f_{\varepsilon,a}(X (t))
  \Big).
\end{align*}
Now the radial part $x\mapsto F(\|x\|)$ contributes no torque, since its gradient is parallel to $x$. For the perturbation,
\[
  \nabla\bigl(\varepsilon\Psi_a(x_1)\bigr)
  =
  \bigl(\varepsilon(x_1-a)_+,0\bigr),
\]
and therefore
\[
  \det\Bigl((x_1,x_2),\nabla(\varepsilon\Psi_a(x_1))\Bigr)
  =
  -\varepsilon x_2(x_1-a)_+.
\]
Hence
\[
  J '(t)
  =
  -\frac{3}{t}J (t)
  +
  \varepsilon\,X_{2}(t)(X_{1}(t)-a)_+.
\]
Multiplying by $t^3$, integrating, and using $J (0)=0$ yields the first claim.
Since $(X_{1}(t)-a)_+=0$ for $t\ge T_\mathrm{rad}$, the right-hand side becomes constant for $t\ge T_\mathrm{rad}$, proving the second claim.
\end{proof}

\begin{proposition}\label{prop:created-angular-momentum}
For a sufficiently small $\varepsilon>0$, 
\[
  \kappa_\varepsilon:=
  \varepsilon \int_0^{T_\mathrm{rad}} s^3 X_{2}(s)(X_{1}(s)-a)_+\,\dd s>0.
\]
\end{proposition}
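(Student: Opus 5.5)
The plan is to treat $\varepsilon$ as a small perturbation of the purely radial flow $\varepsilon=0$. That flow starts at $(2a,a)$ with zero velocity and therefore moves along the line $x_2=x_1/2$. On this line $X_2(X_1-a)_+\ge 0$, and the inequality is strict for short times. The task is to show that neither feature is destroyed for small $\varepsilon$. Write $X^\varepsilon$ for the trajectory and $g_\varepsilon(s)=s^3X^\varepsilon_2(s)(X^\varepsilon_1(s)-a)_+$, so that $\kappa_\varepsilon=\varepsilon\int_0^{T_\mathrm{rad}}g_\varepsilon$. It then suffices to show, for all small $\varepsilon$, that $g_\varepsilon\ge 0$ on $[0,T_\mathrm{rad}]$ and that $\int g_\varepsilon$ is bounded below by a positive constant independent of $\varepsilon$.

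\emph{Step 1: uniform bound on $T_\mathrm{rad}$.} I will use the standard Lyapunov function $\mathcal E(t)=t^2 f_{\varepsilon,a}(X)+2\|X+\tfrac t2\dot X\|^2$. It is nonincreasing along the flow, so $t^2 f_{\varepsilon,a}(X^\varepsilon(t))\le \mathcal E(0)=2\|X_0\|^2=10a^2$. Since $f_{\varepsilon,a}(x)\ge F(\|x\|)$ and $F$ is increasing, we get $\|X^\varepsilon(t)\|\le a$ for all $t\ge T:=\sqrt{10a^2/F(a)}$. Hence $T_\mathrm{rad}\le T$ uniformly in $\varepsilon$. The same Lyapunov bound, together with the identity $f_{\varepsilon,a}(X^\varepsilon(t))+\tfrac12\|\dot X^\varepsilon(t)\|^2\le f_{\varepsilon,a}(X_0)$ and $\varepsilon\le 1$, also gives uniform bounds on $X^\varepsilon$ and $\dot X^\varepsilon$ on $[0,T]$. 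The Lipschitz bound on $\dot X^\varepsilon$ comes from the equation, after handling $t\to0$ via the integral form $t^3\dot X(t)=-\int_0^t s^3\nabla f(X(s))\,\dd s$.

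\emph{Step 2: closeness to the radial line.} I will show that $\sup_{[0,T]}\|X^\varepsilon-X^0\|\to0$ as $\varepsilon\to0$, where $X^0$ lies on the line $x_2=x_1/2$. The argument is by compactness. The bounds from Step 1 and Arzelà--Ascoli give that any sequence $\varepsilon_k\to0$ has a subsequence converging uniformly to some $X^*$. This $X^*$ solves the equation with $f=F(\|\cdot\|)$, because $\nabla f_{\varepsilon,a}\to\nabla F(\|\cdot\|)$ locally uniformly and we can pass to the limit in the integral form. By Lemma~\ref{lem:J-balance}, $t^3\det(X^*,\dot X^*)=\lim_k \varepsilon_k\int_0^t g_{\varepsilon_k}=0$, so the angular momentum of $X^*$ vanishes identically. \textbf{The main obstacle} is to conclude from this that $X^*$ stays on the line $\{x_2=x_1/2\}$ rather than only on a ray up to its first visit to the origin. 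This is where $\nabla F$ fails to be Lipschitz, so uniqueness is not free. My first attempt will avoid uniqueness altogether. While $X^*\neq0$, $J\equiv0$ forces $X^*$ onto the line. If $X^*$ reaches the origin, then I only need the weaker fact that the components satisfy $X^*_2=X^*_1/2$ whenever $X^*_1>a$. I would try to get this from the one-dimensional reduction: the orthogonal component $Y=X^*_2-X^*_1/2$ satisfies a linear-in-$Y$ equation driven by the radial gradient direction, with $Y=\dot Y=0$ at the exit of each on-line phase. A fallback is to note that $\|X^*\|\le a$ after the first arrival near the origin. That would follow from an energy argument if the first approach reaches $\|x\|<a$, so the region $x_1>a$ is only visited in the initial on-line phase. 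Since every subsequential limit then agrees with $X^0$ on the relevant set, the whole family converges.

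\emph{Step 3: conclusion.} Choose $\delta<a/8$ and $\varepsilon$ small enough that $\|X^\varepsilon-X^*\|<\delta$ on $[0,T]$. Whenever $X^\varepsilon_1>a$, we have $X^*_1>a-\delta$, hence $X^\varepsilon_2> (a-\delta)/2-\delta>0$. So $g_\varepsilon\ge0$ on $[0,T_\mathrm{rad}]$. Moreover, on a fixed interval $[0,t_0]$ on which $X^0_1\ge 3a/2$ (this interval exists by continuity, since $X^0(0)=(2a,a)$), we get $g_\varepsilon(s)\ge s^3(a/4)(a/2)$. Hence $\kappa_\varepsilon\ge \varepsilon\, a^2t_0^4/32>0$.
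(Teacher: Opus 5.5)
Your strategy matches the paper's: perturb off the $\varepsilon=0$ flow, which moves on the line $L=\{x_2=x_1/2\}$. On $L$ the integrand is nonnegative, and it is strictly positive for short times. Two of your steps repair points the paper glosses over. Step~1 gives an $\varepsilon$-uniform bound $T_\mathrm{rad}\le T$; the paper passes from a fixed horizon $T$ to the $\varepsilon$-dependent $T_\mathrm{rad}$ without comment. Your compactness argument replaces ``continuous dependence'', which the paper invokes even though $\nabla F(\norm{\cdot})$ is not Lipschitz at the origin.

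The gap is the step you yourself flag: showing that a subsequential limit $X^*$ stays on $L$ after it reaches the origin. Neither of your suggested routes closes it.
\begin{itemize}
\item For $Y=X^*_2-X^*_1/2$ you get $\ddot Y+\frac3t\dot Y+\phi(t)Y=0$ with $\phi=F'(\norm{X^*})/\norm{X^*}$. If $X^*$ passes through $0$ at a time $t_1$ with nonzero speed, then $\phi(t)$ behaves like $1/(c|t-t_1|\log(1/|t-t_1|))$. This is not integrable at $t_1$, so zero Cauchy data there do not give $Y\equiv0$ by the usual Gronwall argument.
\item The energy fallback fails because nothing shows that $F(\norm{X^*})+\frac12\norm{\dot X^*}^2$ has dropped below $F(a)$. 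At an inward crossing of $\norm{x}=a$ it equals $F(a)$ plus a positive kinetic term, and the Lyapunov bound confines the trajectory only for $t\ge T$.
\end{itemize}
Two smaller points. Step~3 uses the line property wherever $X^*_1>a-\delta$, not just where $X^*_1>a$, so your ``weaker fact'' would not suffice there. Also, without uniqueness you cannot claim the whole family converges to $X^0$. Run Step~3 by contradiction along a sequence $\varepsilon_k\to0$ with $\kappa_{\varepsilon_k}\le0$.

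The missing argument is short and needs no uniqueness. Every solution of the radial equation satisfies $t^3\det(X^*,\dot X^*)\equiv0$. Let $t_1=\sup\{t\le T: X^*([0,t])\subset L\}$ and suppose $t_1<T$; then $X^*(t_1),\dot X^*(t_1)\in L$. There are three cases, each a contradiction.
\begin{itemize}
\item If $X^*(t_1)\neq0$, zero angular momentum makes $X^*/\norm{X^*}$ constant on a neighborhood of $t_1$, so $X^*$ stays in $L$ past $t_1$.
\item If $X^*(t_1)=0$ and $v=\dot X^*(t_1)\neq0$, then $X^*\neq0$ on some interval $(t_1,t_1+\eta)$. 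Its direction is constant there and equals $\lim_{t\downarrow t_1}X^*(t)/\norm{X^*(t)}=v/\norm{v}\in L$.
\item If $X^*(t_1)=0=\dot X^*(t_1)$, the energy $F(\norm{X^*})+\frac12\norm{\dot X^*}^2$ has derivative $-\frac3t\norm{\dot X^*}^2\le0$ and vanishes at $t_1$. Hence $X^*\equiv0$ on $[t_1,T]$.
\end{itemize}
So every limit lies on $L$ on all of $[0,T]$. This is exactly what the paper asserts without proof (``the solution stays on the ray through $X_0$''). With it, your Steps~1--3 give a complete argument, more careful than the paper's.
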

(It can be shown that $\kappa_\varepsilon>0$ holds for all $\varepsilon>0$, but we only need the result for one value of $\varepsilon>0$, and the small-$\varepsilon$ proof is shorter.)
\begin{proof}
Consider the ODE
\[
  \ddot X_\varepsilon(t)+\frac{3}{t}\dot X_\varepsilon(t)+\nabla f_{\varepsilon,a}(X_\varepsilon(t))=0,
  \quad t>0,
  \quad X_\varepsilon(0)=(2a,a),
  \quad \dot X_\varepsilon(0)=0.
\]
where we denote the dependence on $\varepsilon$ explicitly. When $\varepsilon=0$, the equation is radial, so the solution stays on the ray through $X_0=(2a,a)$. Thus
\[
  X_0(t)=\rho(t)
  \begin{bmatrix}
  2\\1
  \end{bmatrix}
\]
for some continuous scalar function $\rho(t)$ such that $\rho(0)=a$. Consequently,
\[
  X_{0,2}(s)(X_{0,1}(s)-a)_+
  =
  \rho(s)(2\rho(s)-a)_+
\]
is strictly positive for sufficiently small $s>0$
and nonnegative for all $s$, since $\rho(s)<0$ implies $(2\rho(s)-a)_+=0$.
Therefore
\[
  \int_0^T s^3 X_{0,2}(s)(X_{0,1}(s)-a)_+\,\dd s>0
\]
for any $T>0$.

 By continuous dependence of solutions on the parameter $\varepsilon$, for sufficiently small $\varepsilon$, we have 
\[
  \int_0^{T_\mathrm{rad}} s^3 X_{\varepsilon,2}(s)(X_{\varepsilon,1}(s)-a)_+\,\dd s>0
\]
remains true for all sufficiently small $\varepsilon>0$.
\end{proof}

Proposition~\ref{prop:created-angular-momentum} guarantees existence of an $\varepsilon>0$ that generates strictly positive angular momentum. Fix such an $\varepsilon$ for the remainder of the proof and set
\[
  f:=f_{\varepsilon,a},
  \qquad
  \kappa:=\kappa_\varepsilon.
\]

\subsection{Asymptotic dynamics and infinite path length}

Let
\[
   r(t)=\|X(t)\|
\]
for $t\ge 0$. By the convergence results of prior work, $r(t)\rightarrow 0$.
We first note a convenient lower bound on $F$.
\begin{lemma}\label{lem:lower-bound-F}
There exists $c_F>0$ such that
\[
  F(r)\ge c_F\,\frac{r}{-\log r}
  \qquad\text{for all }0<r\le e^{-2}.
\]
\end{lemma}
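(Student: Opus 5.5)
The plan is to compare $F$ with the explicit function $G(r)=\frac{r}{-\log r}$ on $(0,e^{-2}]$ by differentiating both and comparing integrands. Since $F(r)=\int_0^r \frac{\dd u}{-\log u}$ and $F(0)=0$, it suffices to show that $G(0^+)=0$ and that $G'(u)\le C\,\frac{1}{-\log u}$ for some constant $C$ and all $0<u\le e^{-2}$. Integrating from $0$ to $r$ then gives $G(r)\le C F(r)$, which is the claim with $c_F=1/C$.

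The key step is a direct computation. Write $L=-\log u\ge 2$ on the interval. Then
\[
  G'(u)=\frac{1}{L}+\frac{u\cdot(1/u)}{L^2}=\frac{1}{L}+\frac{1}{L^2}
  =\frac{1}{L}\Bigl(1+\frac{1}{L}\Bigr)\le \frac{3}{2}\cdot\frac{1}{L}.
\]
Here the derivative of $1/L$ with respect to $u$ is $\frac{1}{uL^2}$, because $L'=-1/u$. The term $G(0^+)=\lim_{r\to0} r/(-\log r)=0$ is immediate, and $G$ is continuous on $(0,e^{-2}]$. Hence
\[
  \frac{r}{-\log r}=\int_0^r G'(u)\,\dd u\le \frac32\int_0^r\frac{\dd u}{-\log u}=\frac32 F(r),
\]
so $c_F=2/3$ works.

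The only point requiring care is justifying $G(r)=\int_0^r G'$ despite the behavior at $0$. This follows because $G'$ is bounded by $3/4$ on the interval, hence integrable, and $G$ extends continuously by $G(0)=0$. So the fundamental theorem of calculus applies on $[\delta,r]$, and we can let $\delta\to0$. I do not expect any real obstacle. A monotonicity argument gives an alternative route: $1/(-\log u)$ is increasing in $u$, so $F(r)\ge \int_{r/2}^{r}\frac{\dd u}{-\log u}\ge \frac{r}{2}\cdot\frac{1}{-\log(r/2)}$, and then $-\log(r/2)\le 2(-\log r)$ for $r\le e^{-2}$. This yields $c_F=1/4$, and either route suffices.
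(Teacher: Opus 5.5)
Your proof is correct, and your main argument takes a different route from the paper's. The paper substitutes $u=e^{-v}$ to write $F(r)=\int_R^\infty \frac{e^{-v}}{v}\dd v$ with $R=-\log r$. It keeps only the window $[R,R+1]$ and bounds the decreasing integrand there by its value at $R+1$. It then uses $R+1\le 2R$, which gives $c_F=1/(2e)$.

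Your fallback argument is essentially that same proof in the original variable. The window $[R,R+1]$ corresponds to $u\in[r/e,r]$, and you use $[r/2,r]$ instead.

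Your primary argument compares derivatives instead. You show that $G'(u)=\frac{1}{L}\bigl(1+\frac{1}{L}\bigr)$ is within a bounded factor of the integrand $1/L$ of $F$. Integrating from $0$, justified as you do via $G(0^+)=0$ and boundedness of $G'$, gives $c_F=2/3$.

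The window argument needs only monotonicity of the integrand and no treatment of the endpoint at $0$. Your derivative comparison gives sharpness. Since $1\le 1+\frac{1}{L(u)}\le 1+\frac{1}{L(r)}$ for $0<u\le r$, the same computation yields the two-sided estimate
\[
\frac{r}{1-\log r}\le F(r)\le \frac{r}{-\log r}.
\]
Hence $F(r)\sim r/(-\log r)$ as $r\to0$, and the lemma is optimal up to the value of $c_F$.
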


\begin{proof}
For $0<r\le e^{-2}$, apply a change of varaibles for the integral defining $F$ and get
\[
  F(r)=\int_R^\infty \frac{e^{-v}}{v}\,\dd v,
\]
where $R=-\log r$.
Then,
\[
  F(r)
  \ge \int_R^{R+1}\frac{e^{-v}}{v}\,\dd v
  \ge \frac{e^{-(R+1)}}{R+1}
  =\frac{r}{e(R+1)}
  \ge \frac{1}{2e}\,\frac{r}{R}
  =\frac{1}{2e}\,\frac{r}{-\log r}.
\]
\end{proof}

Next, we show the rate of $r(t)\rightarrow 0$ is rather fast.
(In the quadratic potential case considered in Section~\ref{sec:quadratic}, it can be shown that $r(t)=\Theta(1/t^{3/2})$.)

\begin{proposition}\label{prop:radial-bound}
There exist constants $C_r>0$ and $T_r>0$ such that
\[
  r(t)\le C_r\,\frac{\log t}{t^2}
  \qquad\text{for all }t\ge T_r.
\]
\end{proposition}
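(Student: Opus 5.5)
We will combine the standard $\mathcal{O}(1/t^2)$ function-value rate with the lower bound of Lemma~\ref{lem:lower-bound-F}. The Lyapunov energy
\[
  \mathcal{E}(t)=t^2\bigl(f(X(t))-f(0)\bigr)+2\bigl\|X(t)-0+\tfrac{t}{2}\dot X(t)\bigr\|^2
\]
is nonincreasing along the critical Nesterov flow for convex $C^1$ $f$ with minimizer $0$. This gives
\[
  f(X(t))\le \frac{\mathcal{E}(0)}{t^2}=\frac{C_0}{t^2},\qquad C_0=2\|X_0\|^2 .
\]
We only need it for $t>0$, where monotonicity follows from the usual computation $\mathcal{E}'(t)\le 0$ using convexity, and the initial time causes no trouble since $\mathcal{E}$ is continuous at $0$. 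The value $f(0)=F(0)=0$.

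Next, for $t\ge T_{\mathrm{rad}}$ we have $\|X(t)\|\le a$. Taking $a\le e^{-2}$, which we may assume since the construction leaves $a$ free, we get $f(X(t))=F(r(t))$. If $a$ is not small, we instead use $f\ge F(\|x\|)$, because $\Psi_a\ge0$, together with $r(t)\to0$, so that eventually $r(t)\le e^{-2}$. Lemma~\ref{lem:lower-bound-F} then yields
\[
  c_F\,\frac{r(t)}{-\log r(t)}\le F(r(t))\le f(X(t))\le \frac{C_0}{t^2}
\]
for all large $t$.

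It remains to invert $\phi(r)=r/(-\log r)$, which is increasing on $(0,e^{-2}]$ since $\phi'(r)=(-\log r-1)/(\log r)^2>0$. Set $\delta(t)=C_0/(c_F t^2)$ and $\bar r(t)=C\,\delta(t)\log(1/\delta(t))$. A direct check shows $\phi(\bar r)\ge\delta$ for a suitable constant $C$ (say $C=2$) once $\delta$ is small: indeed $-\log\bar r=\log(1/\delta)-\log C-\log\log(1/\delta)\le\log(1/\delta)$, so $\phi(\bar r)\ge C\delta\ge\delta$. Monotonicity then gives $r(t)\le\bar r(t)$. Since $\log(1/\delta(t))=2\log t+O(1)\le 3\log t$ for large $t$, we conclude $r(t)\le C_r\log t/t^2$ for $t\ge T_r$.

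The main obstacle is this inversion step: we must verify that $\bar r(t)\le e^{-2}$, so that it lies in the monotone range, and keep track of the constants. The energy bound itself is standard.
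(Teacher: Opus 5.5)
Your proof is correct and follows the paper's route. Both arguments use the Lyapunov bound $f(X(t))\le 2\|X_0\|^2/t^2$, then $F(r(t))\le f(X(t))$ because $\Psi_a\ge 0$, then Lemma~\ref{lem:lower-bound-F} once $r(t)\le e^{-2}$. The one difference is the final inversion: the paper needs no monotonicity of $r/(-\log r)$ and no test point $\bar r$, because it splits into the trivial case $r(t)\le t^{-2}$ and the case $r(t)>t^{-2}$, where $-\log r(t)<2\log t$ holds directly, so your \emph{main obstacle} never arises. A minor slip: $\phi'(r)=(1-\log r)/(\log r)^2$, not $(-\log r-1)/(\log r)^2$, though positivity on $(0,e^{-2}]$ is unaffected.
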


\begin{proof}
Since $r(t)\to0$, so there exists $T_r\ge e$ such that $r(t)\le e^{-2}$ for all $t\ge T_r$. By definition of $f=f_{\varepsilon,a}$ and the $\mathcal{O}(1/t^2)$ bound of prior work \cite[Theorem~3]{SuBoydCandes2016}, we have 
\[
  F(r(t))\le f(X(t))\le \frac{2\|X(0)\|^2}{t^2}= \frac{10a^2}{t^2}.
\]
If $r(t)\le t^{-2}$, then automatically
\[
  r(t)\le \frac{\log t}{t^2}.
\]
Else, assume $r(t)>t^{-2}$. Then $-\log r(t)<2\log t$, and Lemma~\ref{lem:lower-bound-F} gives
\[
  \frac{c_F}{2\log t}\,r(t)
  \le c_F\,\frac{r(t)}{-\log r(t)}
  \le F(r(t))
  \le \frac{10a^2}{t^2}.
\]
Hence
\[
  r(t)\le \frac{20a^2}{c_F}\,\frac{\log t}{t^2}.
\]
Combining the two cases proves the claim.
\end{proof}

Finally, we establish that the path length is infinite and prove Theorem~\ref{thm:main}.

\begin{proof}[Proof of Theorem~\ref{thm:main}]
Since $\kappa\neq0$, the conservation law $t^3J(t)=\kappa\ne 0$ implies that $X(t)\neq0$ for all $t>T_\mathrm{rad}$. We may therefore decompose $\dot X(t)$ into radial and tangential components. Let
\[
  e_r(t)=\frac{X(t)}{r(t)}
\]
and choose a unit vector $e_\perp(t)$ orthogonal to $e_r(t)$ so that $\det(e_r(t),e_\perp(t))=1$. Then
\[
  \det\bigl(X(t),\dot X(t)\bigr)
  =
  r(t)\,\bigl(\dot X(t)\cdot e_\perp(t)\bigr).
\]
Hence, by the conservation law $t^3J(t)=\kappa$, 
\begin{equation}
\label{eq:conservation-law}  
  \| \dot X_\perp(t)\|
  :=
  \bigl\| \dot X(t)\cdot e_\perp(t)\bigr\|
  =
  \frac{\kappa}{t^3r(t)}.
\end{equation}
Proposition~\ref{prop:radial-bound} yields, for $t\ge \max\{T_r,T_\mathrm{rad}\}$,
\[
  \| \dot X_\perp(t)\|
  \ge \frac{\kappa}{C_r\,t\log t}.
\]
Since $\|\dot X(t)\|\ge\|\dot X_\perp(t)\|$, we conclude
\[
  \int_0^\infty \| \dot X(t)\|\,\dd t
  \ge \int_{\max\{T_r,T_\mathrm{rad}\}}^\infty \| \dot X_\perp(t)\|\,\dd t
  \ge \frac{\kappa}{C_r}\int_{\max\{T_r,T_\mathrm{rad}\}}^\infty \frac{\dd t}{t\log t}
  =\infty.
\]
Thus the curve is not rectifiable.
\end{proof}



\section{Further discussions}

\subsection{Rectifiability with quadratic potentials}\label{sec:quadratic}

Upon seeing the construction in Theorem~\ref{thm:main}, one may wonder whether a simpler quadratic example could exhibit the same nonrectifiable behavior. In this section, we show that this cannot occur for pure quadratic potentials.

Let
\[ 
   q(x)=\frac12 x^\top Qx,
\]
where $Q\in\R^{d\times d}$ is symmetric positive semidefinite, and consider 
\[
  \ddot X(t)+\frac{3}{t}\dot X(t)+QX(t)=0,
  \qquad t>0,
  \qquad X(0)=X_0,
  \qquad \dot X(0)=0.
\]
Let
 $Q=U^\top \Lambda U$, where $U$ is orthogonal and $\Lambda=\operatorname{diag}(\lambda_1,\dots,\lambda_d)$ with $\lambda_i\ge0$. Writing
\[
  y(t)=UX(t),
  \qquad
  a=UX_0,
\]
we obtain the decoupled scalar equations
\[
  \ddot y_i(t)+\frac{3}{t}\dot y_i(t)+\lambda_i y_i(t)=0,
  \qquad y_i(0)=a_i,
  \qquad \dot y_i(0)=0
\]
for $i=1,\dots,d$.

Let $J_\alpha$ denote the Bessel function of the first kind of order $\alpha$.
From the \cite[Section~3.2]{SuBoydCandes2016}, we know that the solutions are given by
\[y_i(t)=
  \begin{cases}
    a_i, & \lambda_i=0,\\[4pt]
    \displaystyle 2a_i\,\frac{J_1(\sqrt{\lambda_i}\,t)}{\sqrt{\lambda_i}\,t}, & \lambda_i>0.
  \end{cases}
\]
In the case $\lambda_i>0$ one also has
\[
    \dot y_i(t)=-\frac{2a_i}{t}J_2(\sqrt{\lambda_i}\,t),
  \qquad t>0.
\]
Then, we conclude the curve is rectifiable:
\begin{align*}
  \int_0^\infty \|\dot X(t)\|\,\dd t
  &=\int_0^\infty \| \dot y(t)\|\,\dd t
  \le \sum_{\lambda_i>0}\int_0^\infty \|\dot y_i(t)\|\,\dd t\\
  &\le\sum_{\lambda_i>0}\| a_i\|\int_0^\infty \frac{\| J_2(\sqrt{\lambda_i}t)\|}{t}\,\dd t\\
  &=2\sum_{\lambda_i>0}\| a_i\|\int_0^\infty \frac{\| J_2(s)\|}{s}\,\dd s<\infty.
\end{align*}

\subsection{Gradient flow is radial, rectifiable, and reaches the minimizer in finite time}\label{sec:gradient-flow}

Upon seeing the pathological dynamics of Nesterov flow, one may wonder whether \emph{gradient flow} $\dot{X}=-\nabla f(X)$ on the potential $f_{\varepsilon,a}$ also exhibits similarly pathological behavior. In this section, we argue that it does not: gradient-flow trajectories have finite path length and reach the minimizer in \emph{finite time}.

At a high level, the argument is as follows. Since gradient flow converges, the trajectory eventually enters the radial region. Once inside that region, the potential is radial, so the trajectory evolves along a straight ray toward the origin; this immediately yields rectifiability. The remaining dynamics reduce to a one-dimensional ODE for the radius, whose explicit analysis shows finite-time arrival at the minimizer.

This comparison highlights the point that while the momentum-driven Nesterov acceleration does improve worst-case function-value guarantees, but it may slow down convergence and cause extreme oscillatory behavior in some instances.
In other words, Nesterov flow may be worse than gradient flow for some functions.

\subsection{Further pathological properties of the dynamics}


In this section, we further analyze the dynamics induced by $f=f_{\varepsilon,a}$ and obtain further insights on the worst-case behavior of Nesterov flow.

We first review the standard Lyapunov analysis for the critical Nesterov flow.
\begin{lemma}\label{lem:energy-eps}
Consider the setup of Theorem~\ref{thm:main}.
The Lyapunov function
\begin{equation*}
  \mathcal{E}(t)=
  t^2 f_{\varepsilon,a}(X (t))
  +\frac12\|2X (t)+t\dot X (t)\|^2.
\end{equation*}
is nonincreasing on $[0,\infty)$. Consequently,
\[
    \mathcal{E}(t)\le \mathcal{E}(0)=2\| X_0\|^2
  \qquad\text{for all }t\ge0.
\]
\end{lemma}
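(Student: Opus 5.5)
The plan is to differentiate $\mathcal{E}$ directly along the trajectory and show that $\mathcal{E}'(t)\le 0$ for $t>0$, using the ODE to eliminate $\ddot X$ and convexity of $f:=f_{\varepsilon,a}$ to sign the remaining term. Write $V(t)=2X(t)+t\dot X(t)$. Then
\[
\dot V(t)=3\dot X(t)+t\ddot X(t)=3\dot X(t)-3\dot X(t)-t\nabla f(X(t))=-t\nabla f(X(t)),
\]
since $t\ddot X=-3\dot X-t\nabla f(X)$ by the ODE. Next, $\frac{\dd}{\dd t}\bigl(t^2 f(X)\bigr)=2tf(X)+t^2\nabla f(X)\cdot\dot X$. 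This gives
\[
\mathcal{E}'(t)=2tf(X)+t^2\nabla f(X)\cdot\dot X - t\nabla f(X)\cdot(2X+t\dot X)
=2t\bigl(f(X)-\nabla f(X)\cdot X\bigr).
\]
By convexity, $f(0)\ge f(X)+\nabla f(X)\cdot(0-X)$. With $f(0)=\min f=0$ (since $F(0)=0$ and $\Psi_a(0)=0$ for $a>0$), we get $f(X)-\nabla f(X)\cdot X\le 0$, so $\mathcal{E}'\le0$ on $(0,\infty)$.

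For regularity, $f$ is $C^1$, so the solution is $C^2$ on $(0,\infty)$, and the chain-rule computation above is legitimate there. The only delicate point, which I expect to be the main (minor) obstacle, is the endpoint $t=0$, where the ODE is singular. Here I would use that $X$ and $\dot X$ are continuous on $[0,\infty)$ with $\dot X(0)=0$, which is part of the notion of solution assumed throughout the paper, following \cite{SuBoydCandes2016}. Then $\mathcal{E}$ is continuous on $[0,\infty)$, and monotonicity on $(0,\infty)$ extends to $[0,\infty)$ by letting $s\downarrow0$ in $\mathcal{E}(t)\le\mathcal{E}(s)$.

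Finally, $\mathcal{E}(0)=0\cdot f(X_0)+\frac12\|2X_0\|^2=2\|X_0\|^2$, which gives the stated bound $\mathcal{E}(t)\le 2\|X_0\|^2$.
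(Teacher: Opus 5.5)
Your proof is correct and is exactly the Lyapunov computation behind Theorem~3 of \cite{SuBoydCandes2016}, which is all the paper's proof invokes: $\frac{\dd}{\dd t}(2X+t\dot X)=-t\nabla f(X)$ gives $\mathcal{E}'(t)=2t\bigl(f(X)-\langle\nabla f(X),X\rangle\bigr)\le 0$ by convexity and $f(0)=\min f=0$. Your continuity argument at $t=0$ is a detail the paper leaves implicit, and it in fact holds automatically: $(t^3\dot X)'=-t^3\nabla f(X)$ together with continuity of $X$ forces $\|\dot X(t)\|=O(t)$ as $t\downarrow0$.
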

\begin{proof}
Follows from the analysis of Theorem 3 of \cite{SuBoydCandes2016}.
\end{proof}

Next, we show that the function value $f(X(t))$ actually matches the $\mathcal{O}(1/t^2)$ upper bound up to logarithmic factors. To the best of our knowledge, this is the first construction in which Nesterov flow exhibits an actual convergence rate slower than $\Theta(1/t^3)$, which is the rate achieved by quadratic potentials \cite[Section 3.2]{SuBoydCandes2016}.

\begin{theorem}\label{prop:objective-bounds}
In the setup of Theorem~\ref{thm:main}, 
there exist constants $c_0,C_0,T_0>0$ such that
\[
  \frac{c_0}{t^2\log t}
  \le f(X(t))
  \le \frac{C_0}{t^2}
  \qquad\text{for all }t\ge T_0.
\]
\end{theorem}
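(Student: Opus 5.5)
The upper bound is immediate. By Lemma~\ref{lem:energy-eps}, $t^2 f(X(t))\le \mathcal{E}(t)\le 2\|X_0\|^2=10a^2$, so we may take $C_0=10a^2$.

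For the lower bound, I would combine the conservation law with the energy bound to force $r(t)$ to be not too small, and then use the near-linear growth of $F$. For $t\ge T_{\mathrm{rad}}$, Lemma~\ref{lem:J-balance} gives $t^3 r(t)\,|\dot X(t)\cdot e_\perp(t)|=\kappa$. Lemma~\ref{lem:energy-eps} also gives $\|2X(t)+t\dot X(t)\|\le 2\sqrt{5}\,a$. Taking the component along $e_\perp$, and noting that $X\cdot e_\perp=0$, we get $t\,|\dot X\cdot e_\perp|\le 2\sqrt5\,a$. Substituting into the conservation law,
\[
\kappa = t^3 r(t)\,|\dot X\cdot e_\perp| \le 2\sqrt5\,a\, t^2 r(t),
\qquad\text{so}\qquad r(t)\ge \frac{\kappa}{2\sqrt5\,a\,t^2}.
\]
Thus $r(t)\ge c/t^2$ for all $t\ge T_{\mathrm{rad}}$, with $c=\kappa/(2\sqrt5 a)>0$. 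This is the key step, and it is where $\kappa>0$ from Proposition~\ref{prop:created-angular-momentum} enters. I expect it to be the main (though short) obstacle: without the angular momentum, $r$ could collapse much faster.

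Then, for $t\ge T_0:=\max\{T_{\mathrm{rad}},T_r\}$, we have $r(t)\le e^{-2}$, and also $t$ is large enough that $c/t^2\le e^{-2}$. On the radial region $f(X(t))=F(r(t))$, and Lemma~\ref{lem:lower-bound-F} gives $F(r)\ge c_F\, r/(-\log r)$. The map $r\mapsto r/(-\log r)$ is increasing on $(0,e^{-2}]$. Hence
\[
f(X(t))\ge c_F\frac{c/t^2}{-\log(c/t^2)}=\frac{c_F c}{t^2(2\log t-\log c)}\ge \frac{c_0}{t^2\log t}
\]
for $t$ large, e.g.\ with $c_0=c_F c/3$ once $2\log t-\log c\le 3\log t$. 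Enlarging $T_0$ as needed (and keeping $T_0\ge e$ so that $\log t>0$) completes the proof.
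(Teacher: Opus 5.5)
Your proposal is correct and takes essentially the same route as the paper. The upper bound comes from the Lyapunov estimate $t^2 f(X(t))\le\mathcal{E}(0)=10a^2$. For the lower bound, the conservation law $t^3 r(t)\,|\dot X(t)\cdot e_\perp(t)|=\kappa$ combined with the energy bound gives $r(t)\ge c/t^2$, which you then feed into Lemma~\ref{lem:lower-bound-F}. The only minor difference is that you project $2X+t\dot X$ onto $e_\perp$ to remove the $2X$ term directly, which gives the explicit constant $2\sqrt5\,a$; the paper instead bounds the full speed $t\|\dot X(t)\|\le C_v$ using boundedness of $X$.
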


\begin{proof}
The upper bound follows immediately from prior work \cite[Theorem~3]{SuBoydCandes2016}.
We next obtain a lower bound on $r(t)$. By Lemma~\ref{lem:energy-eps},
\[
  t\|\dot X(t)\|
  \le \| 2X(t)+t\dot X(t)\|+2\| X(t)\|
  \le \sqrt{20a^2}+2\| X(t)\|.
\]
Because $X(t)\to0$ as $t\to\infty$, there exists a constant $C_v>0$ such that
\[
   t\|\dot X(t)\|\le C_v
  \qquad\text{for all }t\ge0.
\]
By \eqref{eq:conservation-law},
\[
  \frac{\kappa}{t^3r(t)}
  =\|\dot X_\perp(t)\|
  \le \|\dot X(t)\|
  \le \frac{C_v}{t},
  \qquad\text{for all }t\ge T_\mathrm{rad}.
\]
So,
\[
    r(t)\ge \frac{\kappa}{C_v\,t^2},
  \qquad\text{for all }t\ge T_\mathrm{rad}.
\]
Since $r(t)\rightarrow 0$, we may choose $T_0$ large enough that 
\[
  r(t)\le e^{-2}\quad\text{and}\quad  t\ge \max\Bigl\{e,\,\frac{C_v}{\kappa},\,T_\mathrm{rad}\Bigr\}
  \qquad\text{for all }t\ge T_0.
\]
Then with Lemma~\ref{lem:lower-bound-F}, we conclude 
\[
  f(X(t))
  \ge F(r(t))
  \ge c_F\,\frac{r(t)}{-\log r(t)}
  \ge c_F\,\frac{\kappa/(C_v t^2)}{2\log t-\log(\kappa/C_v)}
  \ge \frac{c_F\kappa}{3C_v\,t^2\log t}
\]
for $t\ge T_0$.
This proves the lower inequality with $c_0=c_F\kappa/(3C_v)$.
\end{proof}

Prior work on point convergence for Nesterov-type accelerated methods took the route of proving finiteness of integrals
\[
  \int_0^\infty t\,\big( f(X(t))-\min f\big)\,\dd t=\infty,
  \qquad
  \int_0^\infty t\,\norm{\dot X(t)}^2\,\dd t=\infty.
\]
By contrast, \cite{JangRyu2025,BotFadiliNguyen2025} established point convergence without such bounds, and suggested that these integrals may be infinite even when convergence holds. The following result provides closure to this question: point convergence does not require their finiteness. This also explains that the earlier attempts to prove point convergence via such estimates were pursuing an unviable path.

\begin{theorem}\label{thm:weighted-divergence}
In the setup of Theorem~\ref{thm:main}, 
\[
  \int_0^\infty t\,f_{\varepsilon,a}(X(t))\,\dd t=\infty,
  \qquad
  \int_0^\infty t\,\norm{\dot X(t)}^2\,\dd t=\infty.
\]
\end{theorem}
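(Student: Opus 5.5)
Both divergences follow from the lower bounds established earlier in the paper, with no new dynamical analysis required.

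\emph{First integral.} By Theorem~\ref{prop:objective-bounds}, there are constants $c_0,T_0>0$ with $f_{\varepsilon,a}(X(t))\ge c_0/(t^2\log t)$ for all $t\ge T_0$. Since $f_{\varepsilon,a}\ge 0$ everywhere, we may discard $[0,T_0]$ and obtain
\[
  \int_0^\infty t\,f_{\varepsilon,a}(X(t))\,\dd t\ \ge\ c_0\int_{T_0}^\infty \frac{\dd t}{t\log t}=\infty.
\]

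\emph{Second integral.} We use the tangential lower bound from the proof of Theorem~\ref{thm:main}. By \eqref{eq:conservation-law} and Proposition~\ref{prop:radial-bound}, for $t\ge T_1:=\max\{T_r,T_\mathrm{rad}\}$ we have
\[
  \|\dot X(t)\|\ \ge\ \|\dot X_\perp(t)\|\ \ge\ \frac{\kappa}{C_r\,t\log t}.
\]
Squaring and multiplying by $t$ gives $t\|\dot X(t)\|^2\ge \kappa^2/(C_r^2\,t\log^2 t)$. This is integrable at infinity, so the crude bound alone does not give divergence.

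The real content is therefore to improve the bound on the kinetic term by exploiting the Lyapunov structure. The plan is the following.

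\begin{enumerate}
\item Start from the energy identity of Su--Boyd--Cand\`es for the Lyapunov function $\mathcal{E}(t)$ of Lemma~\ref{lem:energy-eps}. For the critical friction $3/t$, the derivative is
\[
  \mathcal{E}'(t)=2t\,f(X)-2t\,\langle \nabla f(X),X\rangle \le 0.
\]
This derivative contains no $t\|\dot X\|^2$ term, so a different identity is needed to control the kinetic integral.

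\item Use the virial-type identity obtained by differentiating $\frac{d}{dt}\bigl(t^3\langle X,\dot X\rangle\bigr)$. Using the ODE, this gives
\[
  \frac{d}{dt}\bigl(t^3\langle X,\dot X\rangle\bigr)=t^3\|\dot X\|^2-t^3\langle \nabla f(X),X\rangle .
\]
Alternatively, divide by $t^2$ and work with $\frac{d}{dt}\bigl(t\langle X,\dot X\rangle\bigr)$. One then compares $\int t\|\dot X\|^2$ with $\int t\langle\nabla f(X),X\rangle$, plus boundary terms of size $t\,r(t)\,\|\dot X(t)\|\to 0$; these vanish by Proposition~\ref{prop:radial-bound} together with the bound $t\|\dot X\|\le C_v$.

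\item In the radial region, $\langle\nabla f(X),X\rangle=F'(r)\,r\ge F(r)\cdot$(const). By convexity and the form of $F$, in fact $rF'(r)\asymp F(r)$ up to a $\log$ factor; more precisely $rF'(r)=r/(-\log r)\ge c\,F(r)$. Combined with the first divergence, this gives $\int t\langle\nabla f,X\rangle\,\dd t=\infty$.

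\item A cleaner variant is to use the Lyapunov bound $\int^\infty t\,\bigl(\langle\nabla f,X\rangle-f\bigr)\,\dd t\le \mathcal{E}(0)$ together with the virial identity. This forces $\int t\|\dot X\|^2\,\dd t$ and $\int t f\,\dd t$ to diverge together, which concludes the argument.
\end{enumerate}

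\textbf{Main obstacle.} The bottleneck is justifying the virial comparison precisely. Two points need care: the boundary terms must be shown to be bounded, and the sign bookkeeping must genuinely yield divergence of $\int t\|\dot X\|^2$ rather than a mere inequality pointing the wrong way.
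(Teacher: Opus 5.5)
Your first integral is handled exactly as in the paper. Your second argument is also correct, but it runs through a virial identity, whereas the paper uses the energy $\mathcal{K}(t)=t^2f(X)+\frac12t^2\|\dot X\|^2$. The paper's identity $\mathcal{K}'=2t(f-\|\dot X\|^2)$, together with boundedness of $\mathcal{K}$ (from Lemma~\ref{lem:energy-eps}), gives $\int_{T_0}^R t\|\dot X\|^2\,dt=\int_{T_0}^R tf\,dt+O(1)$ in one line.

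Two small fixes settle your ``main obstacle.''
\begin{itemize}
\item Dividing the $t^3$-identity by $t^2$ does not give the derivative of $t\langle X,\dot X\rangle$ alone. The exact identity is
\[
\frac{d}{dt}\Bigl(t\langle X,\dot X\rangle+\|X\|^2\Bigr)=t\|\dot X\|^2-t\langle\nabla f(X),X\rangle .
\]
The boundary terms are then bounded by $C_v r(t)+r(t)^2$. You do not need Proposition~\ref{prop:radial-bound} for this, only $t\|\dot X\|\le C_v$ and boundedness of $r$.
\item There is no logarithmic loss in step~3. Convexity with $f(0)=0$ gives $\langle\nabla f(x),x\rangle\ge f(x)$ for every $x$, in particular $rF'(r)\ge F(r)$ with constant $1$. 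Hence $\int_T^R t\|\dot X\|^2\,dt\ge\int_T^R tf\,dt-C\to\infty$, so the sign comes out the right way.
\end{itemize}

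Your step~4 is the paper's argument in disguise. Since $\mathcal{E}=\mathcal{K}+2(\|X\|^2+t\langle X,\dot X\rangle)$, the dissipation identity $\mathcal{E}'=-2t(\langle\nabla f(X),X\rangle-f)$ combined with your virial identity is exactly $\mathcal{K}'=2t(f-\|\dot X\|^2)$.

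The paper's packaging is more economical: a single exact identity yields a two-sided comparison. Your step-3 variant gives only a one-sided inequality, which is all the divergence claim requires.
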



\begin{proof}
By Proposition~\ref{prop:objective-bounds},
\[
  f_{\varepsilon,a}(X(t))\ge \frac{c_0}{t^2\log t}
  \qquad\text{for all }t\ge T_0.
\]
Therefore
\[
  \int_0^\infty t\,f_{\varepsilon,a}(X(t))\,\dd t
  \ge
  \int_{T_0}^\infty t\,f_{\varepsilon,a}(X(t))\,\dd t
  \ge
  c_0\int_{T_0}^\infty \frac{\dd t}{t\log t}
  =\infty.
\]
This proves the first claim.

For the second claim, define
\[
  \mathcal{K}(t):=t^2 f(X(t))+\frac12 t^2\norm{\dot X(t)}^2,
  \qquad t\ge 0.
\]
With direct calculations, we get
\begin{align*}
  \mathcal{K}'(t)
  &=2t\big(f(X(t))-\norm{\dot X(t)}^2\big)
    +t^2\Bigl\langle \dot X(t),\ddot X(t)+\tfrac3t\dot X(t)+\nabla f(X(t))\Bigr\rangle \\
  &=2t\big(f(X(t))-\norm{\dot X(t)}^2\big)
  \qquad\text{for all }t>0.
\end{align*}
Hence, for all $R\ge T_0$,
\[
  \underbrace{
  \int_{T_0}^R t\,\norm{\dot X(t)}^2\,\dd t}_{\rightarrow\infty \text{ as } R\rightarrow\infty}
  =
  \underbrace{
  \int_{T_0}^R t\,f(X(t))\,\dd t}_{\rightarrow \infty\text{ as } R\rightarrow\infty}
  +\underbrace{\frac{\mathcal{K}(T_0)-\mathcal{K}(R)}{2}}_{\text{bounded as } R\rightarrow\infty}.
\]
It remains to show that $\mathcal{K}$ is bounded on $[T_0,\infty)$. The quantity
\[
  \mathcal{E}(t)=t^2 f(X(t))+\frac12 \norm{2X(t)+t\dot X(t)}^2
\] 
is bounded by Lemma~\ref{lem:energy-eps}, $X(t)\rightarrow 0$ implies $X(t)$ is bounded, so
$\norm{t\dot X(t)}$ is bounded. Consequently, $\mathcal{K}$ is bounded on $[T_0,\infty)$.
\end{proof}

\section{Conclusion}
This work identifies a new pathological property of the momentum-driven Nesterov flow: trajectories may have infinite path length despite the point convergence. Together with the parallel line of work bounding the path length of gradient flow, this result highlights path length as a regularity property of interest in the analysis of optimization algorithms.

\end{document}